\documentclass[11pt]{article}

\usepackage[a4paper,margin=1.08in]{geometry}
\usepackage{amsmath,amssymb,amsthm,mathtools}
\usepackage{microtype}
\usepackage{hyperref}
\hypersetup{hidelinks}

\newtheorem{theorem}{Theorem}[section]
\newtheorem{proposition}[theorem]{Proposition}
\newtheorem{lemma}[theorem]{Lemma}
\newtheorem{corollary}[theorem]{Corollary}

\theoremstyle{remark}
\newtheorem{remark}[theorem]{Remark}

\newcommand{\PP}{\mathbb P}
\newcommand{\EE}{\mathbb E}
\newcommand{\Poi}{\operatorname{Poisson}}
\newcommand{\PPP}{\operatorname{PPP}}
\newcommand{\diam}{\operatorname{diam}}
\newcommand{\dist}{\operatorname{dist}}
\newcommand{\ord}{\operatorname{ord}}
\newcommand{\cQ}{\mathcal Q}
\newcommand{\cE}{\mathcal E}
\newcommand{\cL}{\mathcal L}
\newcommand{\cR}{\mathcal R}
\newcommand{\cX}{\mathcal X}
\newcommand{\Gum}{G_{\mathrm{Gum}}}
\newcommand{\doi}[1]{\href{https://doi.org/#1}{doi:#1}}

\title{Exact Diameter Windows for Random Cayley Graphs\\
on Odd-Order Abelian Groups}

\author{Yao Zhi}
\date{}

\begin{document}

\maketitle

\begin{abstract}
	Let \(d\ge2\) be fixed and let \(G_n\) be finite abelian groups of odd
	orders \(N_n\to\infty\).  We determine the centered diameter-\(d\)
	critical window for the standard random Cayley graph in which each
	nonzero group element is selected independently.  Writing
	\(M_n=(N_n-1)/2\), we prove that the normalized first
	distance-\(d\) coverage times satisfy
	\[
	\sum_{[x]\in(G_n\setminus\{0\})/\{\pm1\}}
	\delta_{\frac{N_n^{d-1}}{d!}\tau_{n,[x]}^d-\log M_n}
	\xrightarrow{d}
	\PPP(e^{-z}\,dz).
	\]
	Consequently, the number of antipodal defects in the critical window
	converges in total variation to a Poisson law, the diameter transition
	has the Gumbel profile \(e^{-e^{-c}}\), and the diameter hitting time
	has Gumbel fluctuations.  In the original generator-density
	parametrization this yields the sharp fixed-\(d\) threshold constant
	\(d!/2^d\) throughout the odd-order abelian class.  For \(d=2\), we
	additionally obtain an exact path--cycle decomposition of the target
	representation graphs.
\end{abstract}

\medskip
\noindent\textbf{Keywords.} Random Cayley graph; diameter; critical window;
Poisson point process; Gumbel limit; finite abelian group.

\smallskip
\noindent\textbf{2020 Mathematics Subject Classification.} Primary 05C80;
Secondary 05C12, 05C25, 60G55.

\section{Introduction and main results}
\label{sec:intro}

Random Cayley graphs provide a natural setting in which algebraic symmetry
interacts with probabilistic dependence.  In the independent-density model
\(\mathcal G(G,p)\), each group element is placed in a random set independently
with probability \(p\), and this set together with its inverses generates an
undirected Cayley graph; the identity is immaterial.  For diameter two,
Christofides and Markstr\"om established general threshold bounds and showed
that the threshold constant can depend on the underlying family of groups
\cite{ChristofidesMarkstrom2014,ChristofidesMarkstromRange2014}.  Related
metric questions in models with a prescribed or random number of generators
have been studied for cyclic, abelian, nilpotent and nonabelian families; see,
for example,
\cite{MarklofStrombergsson2013,HermonOleskerTaylor2023,ElBazPagano2021,Sardari2019}.
These models are distinct from the independent-density model considered here.

Recently, Christofides, Markstr\"om and Savvidou developed a
representation-hypergraph method for diameter \(d\) and proved threshold-scale
results uniformly for
\[
    2\le d\le
    (1-\gamma)\sqrt{\frac{\log N}{2\log\log N}},
\]
where \(\gamma\in(0,1)\) is fixed
\cite{ChristofidesMarkstromSavvidou2026}.  For abelian groups their lower
threshold has constant \(d!/2^d\), while the matching upper theorem with this
constant is stated for cyclic groups
\cite[Theorems~1.3 and~1.5]{ChristofidesMarkstromSavvidou2026}.  These results
locate the transition on the multiplicative threshold scale; they do not
identify the centered critical-window law or the extremal process of the last
uncovered targets.

Our purpose is to determine this finer limit for every fixed \(d\ge2\) over
the entire class of finite abelian groups of odd order.  Theorem~\ref{thm:main-ppp}
shows that the normalized first distance-\(d\) coverage times converge to a
Poisson point process with intensity \(e^{-z}\,dz\).  Consequently the number
of antipodal defects has a Poisson limit in total variation, the diameter
transition has profile \(e^{-e^{-c}}\), and the diameter hitting time has
Gumbel fluctuations.  In the original generator-density parametrization,
Corollary~\ref{cor:original-q} gives, for every fixed \(d\),
\[
    q^d\sim \frac{d!}{2^d}\frac{\log N}{N^{d-1}}
\]
throughout the odd-order abelian class.  Thus the matching constant from the
cyclic threshold theorem extends to this class, and the threshold statement is
refined to a centered extremal-process limit.

The top-layer representation hypergraphs and their Janson dependency estimates
are closely related to the framework of
\cite{ChristofidesMarkstromSavvidou2026}.  Two additional issues become
essential at the centered scale.  First, the leading representation count must
be accurate enough that its relative error remains negligible after
multiplication by \(\log N\).  Second, representations using fewer than \(d\)
inverse classes may be highly nonuniform because of torsion.  We resolve the
first issue by proving the uniform expansion
\(N^{d-1}/d!+O_d(N^{d-2})\) for top supports together with an
\(O_d(N^{d-2})\) overlap bound.  For the second, rather than classifying the
relevant torsion strata, we prove that for each \(s<d\) the total number of
\(s\)-supports over all targets is \(O_d(N^s)\).  This average estimate removes
only \(o(N)\) exceptional targets; their probability of appearing in any fixed
critical window is itself \(o(1)\).  The resulting uniform multitime
factorization is what yields the Poisson point process, rather than only a
one-time threshold estimate.

Odd order enters in two essential places.  Every nonzero element then belongs
to a two-element inversion class, and multiplication by \(2\) is an
automorphism.  The latter makes the two-variable systems arising from distinct
sign patterns uniformly invertible and is the algebraic reason the overlap
bounds are independent of the structure of the group.

The case \(d=2\) has additional exact structure.  If \(m=\ord(x)\),
Theorem~\ref{thm:d2-geometry} shows that the representation graph of \(x\) on
the inversion quotient is
\[
    P_{(m-1)/2}\sqcup\frac{N/m-1}{2}\,C_m.
\]
Although this graph depends on the target order, it always has exactly
\((N-3)/2\) edges, and every two distinct target graphs share exactly one edge.
Thus the general fixed-\(d\) argument gives the universal asymptotic process,
while diameter two admits a strictly sharper microscopic description.

The remainder of the paper is organized as follows.  Section~\ref{sec:technical}
develops the representation-support estimates, the regular--exceptional
decomposition, and the uniform multitime factorization in the critical
window.  Section~\ref{sec:extremal} derives the Poisson extremal process and
the diameter-window and hitting-time consequences.  Section~\ref{sec:d2}
gives the exact diameter-two representation geometry.

\subsection{Model and main statements}

Let \(G\) be a finite abelian group, written additively, of odd order
\(N\).  Let \(S(q)\subseteq G\setminus\{0\}\) contain each nonzero
element independently with probability \(q\), and define
\[
    \Gamma_G(q)
    :=
    \operatorname{Cay}\bigl(G,S(q)\cup(-S(q))\bigr).
\]
Put
\[
    \cQ_G:=(G\setminus\{0\})/\{\pm1\},
    \qquad
    [a]:=\{a,-a\},
    \qquad
    M:=|\cQ_G|=\frac{N-1}{2}.
\]
Since \(N\) is odd, every class has size two.  An inverse class is
active once at least one of its two elements is selected, so its
activation probability is
\begin{equation}
    r(q)=1-(1-q)^2=2q-q^2.
    \label{eq:r-q}
\end{equation}

We first work in pair-density time.  Let
\((T_\xi)_{\xi\in\cQ_G}\) be independent
\(\operatorname{Unif}[0,1]\) variables, declare \(\xi\) active at
time \(r\) when \(T_\xi\le r\), and write \(\Gamma_G^*(r)\) for the
resulting Cayley graph.  For each fixed \(q\),
\(\Gamma_G(q)\) and \(\Gamma_G^*(r(q))\) have the same law.

Fix \(d\ge2\).  For \([x]\in\cQ_G\), define
\[
    \tau_{G,[x]}
    :=
    \inf\{r:\dist_{\Gamma_G^*(r)}(0,x)\le d\}.
\]
For a sequence \(G_n\) of odd orders \(N_n\to\infty\), put
\(M_n=(N_n-1)/2\) and
\begin{equation}
    \Xi_n
    :=
    \sum_{[x]\in\cQ_{G_n}}\delta_{Z_{n,[x]}},
    \qquad
    Z_{n,[x]}
    :=
    \frac{N_n^{d-1}}{d!}\tau_{G_n,[x]}^d-\log M_n.
    \label{eq:defect-process}
\end{equation}

\begin{theorem}[Universal fixed-diameter extremal process]
\label{thm:main-ppp}
Let \(d\ge2\) be fixed.  For every sequence of finite abelian groups
\(G_n\) of odd orders \(N_n\to\infty\),
\[
    \Xi_n\xrightarrow{d}\PPP(e^{-z}\,dz)
\]
in the space of locally finite point measures on \(\mathbb R\) with
the vague topology.
\end{theorem}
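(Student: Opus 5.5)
We reduce the point-process convergence to finite-dimensional avoidance probabilities. Since \(\PPP(e^{-z}\,dz)\) is simple with a diffuse intensity measure, Kallenberg's criterion applies. It suffices to show, for every finite union \(B=\bigcup_{j}(a_j,b_j]\) of bounded intervals, that \(\EE\,\Xi_n((a,b])\to e^{-a}-e^{-b}\) and that \(\PP(\Xi_n(B)=0)\to\exp(-\int_B e^{-z}dz)\). Equivalently, fix times \(c_1<\dots<c_k\) and put \(r_i=r_n(c_i)\), where
\[
\frac{N^{d-1}}{d!}r_n(c)^d=\log M+c .
\]
The task is then to show that the joint law of the indicator vectors \(\mathbf 1\{\tau_{[x]}>r_i\}\), aggregated over \([x]\), has the multitime Poisson factorization. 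Concretely, we prove
\[
\PP\Bigl(\bigcap_i\{\text{no }[x]\text{ with }\tau_{[x]}\in I_i\}\Bigr)\to\prod_i\exp\bigl(-(e^{-c_{i}}-e^{-c_{i+1}})\bigr).
\]

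\textbf{Step 1: representations.} The event \(\{\tau_{[x]}>r\}\) is a decreasing event. It says that no ``representation'' of \(x\) is active, where a representation is a set \(F\) of at most \(d\) inverse classes such that \(x=\sum\pm a_i\) with the \(a_i\) drawn from \(F\). We split representations by support size \(s=|F|\).

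For the top layer \(s=d\), we count supports. Each choice of \(d\) classes with signs gives a sum, and since 2 is invertible the count per target is
\[
\frac{2^d\binom{M}{d}}{N}\cdot(\text{correction}) = \frac{N^{d-1}}{d!}+O_d(N^{d-2})
\]
uniformly in \(x\). This count is obtained by solving the last coordinate and subtracting degenerate coincidences. The precision matters: after multiplying the error by \(r^d\asymp \log N/N^{d-1}\), it must contribute \(o(1)\). The expected number of active top supports is then \(\frac{N^{d-1}}{d!}r^d+o(1)=\log M+c+o(1)\).

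For the lower layers \(s<d\), we bound the total number of \(s\)-supports summed over all targets by \(O_d(N^s)\). Summing \(r^s\cdot\#\{s\text{-supports of }x\}\) over \(x\) gives \(O(N^s r^s)=o(N)\). So all but \(o(N)\) "regular" targets have negligible lower-layer mass. Exceptional targets have survival probability at most \(\exp(-\mu_{\text{top}})=O(1/M)\), so in expectation only \(o(1)\) of them are uncovered at \(r_1\), and they can be discarded.

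\textbf{Step 2: Janson estimates.} For a single regular target, Janson's inequality gives
\[
\PP(\tau_{[x]}>r)=\exp(-\mu_x(r)+O(\Delta_x)),
\]
with \(\Delta_x=\sum_{F\cap F'\neq\emptyset}r^{|F\cup F'|}\). Two top supports of the same target sharing \(t\ge1\) classes number \(O_d(N^{d-2})\) for \(t=1\) overlaps, and fewer for larger \(t\). This uses the invertibility of the two-variable sign systems, valid because 2 is an automorphism. The bound gives \(\Delta_x=O(N^{2d-1-t}\cdots)r^{2d-t}=o(1/\log N)\)-type negligibility, which delivers \(\EE\,\Xi_n((a,b])\to e^{-a}-e^{-b}\).

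\textbf{Step 3: multitime factorization.} This is where the main obstacle lies. We need a second-moment, or Poisson-approximation, control across distinct targets. For pairs \([x]\neq[y]\), we show
\[
\PP(\tau_{[x]}>r,\tau_{[y]}>r')=(1+o(1))\,\PP(\tau_{[x]}>r)\,\PP(\tau_{[y]}>r')
\]
uniformly over all but \(o(N^2)\) pairs. This follows from Janson applied to the union family. The cross-overlap term counts pairs \((F,F')\) with \(F\) representing \(x\), \(F'\) representing \(y\), and \(F\cap F'\ne\emptyset\). That count is \(O_d(N^{2d-3})\) by the same uniform two-variable inversion argument, so the cross term is \(O(N^{2d-3}r^{2d-1})=o(1)\). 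The remaining pairs are handled by a crude bound.

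With this factorization, we apply the Chen--Stein method to the dependent indicators \(\mathbf 1\{\tau_{[x]}\in I_i\}\), using Harris/FKG monotonicity (a positively related coupling, since all events are decreasing in the activation clocks). Alternatively, a factorial-moment computation of order \(k\) extends the pairwise overlap bounds to \(k\)-tuples. Either route gives the joint Poisson limit with the stated means, and Kallenberg's theorem then yields \(\Xi_n\to\PPP(e^{-z}dz)\).

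The uniformity of the overlap counts in the group structure, through the doubling automorphism, is the step I expect to require the most care.
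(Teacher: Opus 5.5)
\textbf{Gap: Step 3.} Your Steps 1--2 follow the paper's architecture: the uniform top count, the averaged bound $\sum_{[x]}\ell_s(x)=O_d(N^s)$, the regular/exceptional split with exceptional targets removed by a first-moment bound, and top-layer Janson. The problem is the route you put first in Step 3. The indicators $\mathbf 1\{\tau_{[x]}\in I_i\}=\mathbf 1\{r_i<\tau_{[x]}\le r_{i+1}\}$ are not monotone in the clocks $T_\xi$: $\{\tau_{[x]}>r_i\}$ is increasing and $\{\tau_{[x]}\le r_{i+1}\}$ is decreasing. So Harris/FKG gives no positively related coupling for them. Only the tail indicators $\mathbf 1\{\tau_{[x]}>r\}$ are monotone. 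For those, Chen--Stein with positive relation plus your pairwise covariance bound gives a Poisson limit for each single-time count $V_n(c)$, but says nothing about their joint law. It does not identify the law of $(V_n(c_1),\dots,V_n(c_m))$, and already $\PP(\Xi_n((a,b])=0)=\PP(V_n(a)=V_n(b))$ depends on that joint law. So the avoidance probabilities required by Kallenberg's criterion are not reached this way.

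\textbf{What works instead.} The route that works is your one-line factorial-moment alternative, which is exactly the paper's. It needs a $k$-fold, multi-threshold estimate rather than pairwise bounds: uniformly over $k$ pairwise distinct regular targets and $c_1,\dots,c_k$ in a compact set, $\PP(\bigcap_iD_{x_i}(r_N(c_i)))=M^{-k}e^{-\sum_ic_i}(1+o(1))$ (Proposition~\ref{prop:multitime}). Three points in its proof need more care than your sketch gives.

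First, a support shared by several targets gives nested events, of which only the largest threshold is kept. The resulting change in $\mu$ is $O_{d,k}(N^{d-2}r_*^d)=O(\log N/N)$, because common top supports number $O_d(N^{d-2})$ by the two-variable sign system. This full-overlap case carries weight $r^d$, not $r^{2d-1}$, so your lumped bound $O(N^{2d-3}r^{2d-1})$ does not cover it.

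Second, $\Delta$ must be bounded stratum by stratum in $j=|E\cap F|$, by $O_{d,k}(N^{2d-j-2})\,r_*^{2d-j}=O(N^{-j/d}(\log N)^{(2d-j)/d})$. As written, your Step 2 exponent $N^{2d-1-t}$ is off by one, and with it $N^{2d-1-t}r^{2d-t}\asymp N^{1-t/d}(\log N)^{(2d-t)/d}\to\infty$.

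Third, lower supports should stay out of Janson, since your $\Delta_x$ over all representations includes lower--lower overlaps you never bound. The paper instead uses the Harris sandwich $P_{\mathrm{top}}\PP(B_{\mathrm{low}})\le\PP(\bigcap_iD_{x_i}(r_i))\le P_{\mathrm{top}}$ together with a union bound on $B_{\mathrm{low}}^c$.

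With the estimate in hand, inclusion--exclusion over the $2^k$ endpoint choices gives $\PP(Z_{n,[x_i]}\in I_i\ \forall i)=M^{-k}\prod_i(e^{-a_i}-e^{-b_i})+o(M^{-k})$. Hence $\EE(N_n^{\mathrm{reg}}(B))_k\to\nu(B)^k$, and Bonferroni gives $\PP(N_n^{\mathrm{reg}}(B)=0)\to e^{-\nu(B)}$.
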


For a group of order \(N\), define
\begin{equation}
    r_N(c)
    :=
    \left(
        \frac{d!(\log M+c)}{N^{d-1}}
    \right)^{1/d},
    \qquad M=\frac{N-1}{2},
    \label{eq:critical-r}
\end{equation}
whenever the expression is positive.  For the sequence \(G_n\), write
\(r_n(c):=r_{N_n}(c)\), and let
\[
    V_n(c)
    :=
    \#\bigl\{
        [x]\in\cQ_{G_n}:
        \dist_{\Gamma_{G_n}^*(r_n(c))}(0,x)>d
    \bigr\}.
\]

\begin{corollary}[Exact diameter-\(d\) window]
\label{cor:exact-window}
For every fixed \(c\in\mathbb R\),
\[
    d_{\mathrm{TV}}
    \bigl(\mathcal L(V_n(c)),\Poi(e^{-c})\bigr)
    \longrightarrow0.
\]
Moreover,
\[
    \PP\bigl(\diam\Gamma_{G_n}^*(r_n(c))\le d\bigr)
    \longrightarrow
    \exp\{-e^{-c}\}.
\]
If \(W_n(c)\) is the number of nonzero vertices at distance greater
than \(d\) from \(0\), then \(W_n(c)=2V_n(c)\) identically.
\end{corollary}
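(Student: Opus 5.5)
The plan is to read everything off Theorem~\ref{thm:main-ppp}. First, \(\tau_{n,[x]}\) is a stopping threshold: \(\Gamma^*_{G_n}(r)\) is monotone increasing in \(r\), so \(\dist(0,x)\le d\) at time \(r\) if and only if \(\tau_{n,[x]}\le r\). This requires that the infimum be attained. It is, since the activation times are a.s. distinct and the event is determined by finitely many \(T_\xi\le r\) comparisons, so the set of such \(r\) is a closed interval \([\tau,1]\). Because \(r\mapsto \frac{N^{d-1}}{d!}r^d-\log M\) is strictly increasing on \(r>0\) and sends \(r_n(c)\) to \(c\), we get
\[
V_n(c)=\#\{[x]:\tau_{n,[x]}>r_n(c)\}=\Xi_n\bigl((c,\infty)\bigr)
\]
for \(n\) large, once \(\log M_n+c>0\). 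Here \(r_n(c)\le1\) eventually because \(\log N/N^{d-1}\to0\). In addition, \(Z_{n,[x]}\le \frac{N^{d-1}}{d!}-\log M_n<\infty\) always, since at \(r=1\) every class is active and \(x\) is adjacent to \(0\).

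Next, I transfer the vague convergence to the count on \((c,\infty)\). This set is unbounded, so vague convergence alone is not enough. I would use that the limit intensity \(\int_c^\infty e^{-z}dz=e^{-c}\) is finite and obtain tightness at \(+\infty\) by a first-moment bound. Specifically, \(\EE\,\Xi_n((K,\infty))=M_n\,\max_{[x]}\PP(\tau_{n,[x]}>r_n(K))\) up to exceptional targets. This should be \(\lesssim e^{-K}(1+o(1))+o(1)\) by the uniform single-time estimate from Section~\ref{sec:technical}: the regular targets have \(\PP(\text{not covered at } r_n(K))=(1+o(1))e^{-K}/M_n\), and the exceptional set has size \(o(N)\), so its contribution is \(o(1)\) as stated in the introduction. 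Combining this with \(\Xi_n((c,K])\to\Poi(\int_c^K e^{-z}dz)\) in law (a bounded interval with a null-boundary limit) and letting \(K\to\infty\) gives \(V_n(c)\to\Poi(e^{-c})\) in distribution. Since the laws are supported on \(\mathbb Z_{\ge0}\), convergence in distribution is equivalent to pointwise convergence of the mass functions. By Scheffé's lemma this upgrades to total variation convergence. I expect this tail-tightness step to be the main obstacle. If the vague-topology formulation in Theorem~\ref{thm:main-ppp} is set up on \((-\infty,\infty]\) or already includes convergence of \(\Xi_n\) on half-lines, the step is immediate. Otherwise it needs the first-moment estimate quoted above, uniformly in \(n\).

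For the diameter, I use vertex transitivity of Cayley graphs: \(\diam\Gamma\le d\) if and only if every \(x\ne0\) satisfies \(\dist(0,x)\le d\), i.e.\ \(V_n(c)=0\). Hence \(\PP(\diam\le d)=\PP(V_n(c)=0)\to e^{-e^{-c}}\) by the total variation statement. Finally, the graph is undirected and \(x\mapsto -x\) is an automorphism fixing \(0\) (inversion preserves the symmetric connection set in an abelian group). Therefore \(\dist(0,x)=\dist(0,-x)\). Since \(N\) is odd, \(x\neq -x\) for \(x\ne0\), so each defective class contributes exactly two vertices, and \(W_n(c)=2V_n(c)\) holds deterministically.
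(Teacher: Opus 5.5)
Your argument is correct, but it reaches the result by a different route from the paper's. The paper never invokes Theorem~\ref{thm:main-ppp} for this corollary. It splits \(V_n(c)=V_n^{\mathrm{reg}}(c)+V_n^{\mathrm{exc}}(c)\). It takes all \(c_i=c\) in Proposition~\ref{prop:multitime} to get \(\EE(V_n^{\mathrm{reg}}(c))_k\to e^{-kc}\) for every fixed \(k\), applies the factorial-moment criterion and Scheff\'e, and discards the exceptional part by Lemma~\ref{lem:exceptional-vanish}. Because the multitime proposition is stated directly for the half-line events \(\{Z_{n,[x]}>c\}=D_x(r_n(c))\), the unboundedness of \((c,\infty)\) never arises there.

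You instead use the process limit on the bounded window \((c,K]\) and close the gap at \(+\infty\) with a first-moment bound. That bound is exactly the \(k=1\) case of the same proposition plus the same exceptional-target lemma. So your route needs the same technical inputs plus the full point-process theorem, which is itself proved from the all-\(k\) factorial moments, and is longer than necessary. Its merit is that it makes explicit what the paper sidesteps: vague convergence on \(\mathbb R\) alone does not control \(\Xi_n((c,\infty))\), and tightness at \(+\infty\) is the only extra ingredient needed to pass from the process statement to the window statement.

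One small imprecision in the tail bound should be fixed. The exceptional contribution is not small merely because \(|\cX_N|=o(N)\). You also need the per-target bound \(\PP(D_x(r_n(K)))\le C_K/M\) for \emph{every} target, which Lemma~\ref{lem:exceptional-vanish} supplies via the single-target top-layer Janson estimate. Likewise, the regular part should be written as a sum bounded by \(|\cR_N|\) times the uniform estimate, not as \(M_n\) times a maximum.

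The remaining steps match the paper: the identification \(V_n(c)=\Xi_n((c,\infty))\), vertex transitivity for the diameter, and the inversion automorphism together with odd order for \(W_n(c)=2V_n(c)\).
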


Let
\[
    R_n^*
    :=
    \inf\{r:\diam\Gamma_{G_n}^*(r)\le d\}.
\]

\begin{corollary}[Gumbel hitting time]
\label{cor:gumbel-pair}
If \(\Gum\) has distribution function
\(\PP(\Gum\le c)=\exp\{-e^{-c}\}\), then
\[
    \frac{N_n^{d-1}}{d!}(R_n^*)^d-\log M_n
    \xrightarrow{d}
    \Gum.
\]
\end{corollary}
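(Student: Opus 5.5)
The plan is to derive the corollary directly from Theorem~\ref{thm:main-ppp} (or equivalently from the one-time Poisson statement of Corollary~\ref{cor:exact-window}) by exploiting the monotonicity of the diameter event in time.

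\textbf{Step 1: rewrite the event.} Since activation is monotone in \(r\), the graph \(\Gamma_{G_n}^*(r)\) increases with \(r\), so \(\diam\Gamma_{G_n}^*(r)\le d\) is an increasing event. Hence
\[
R_n^*=\max_{[x]\in\cQ_{G_n}}\tau_{G_n,[x]},
\]
where we use that distances in a Cayley graph are translation invariant. Thus \(\diam\le d\) holds iff \(\dist(0,x)\le d\) for all \(x\), and \(\dist(0,x)=\dist(0,-x)\), so the maximum may be taken over inversion classes. The map \(t\mapsto \frac{N_n^{d-1}}{d!}t^d-\log M_n\) is strictly increasing on \([0,\infty)\). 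Therefore
\[
\frac{N_n^{d-1}}{d!}(R_n^*)^d-\log M_n=\max_{[x]}Z_{n,[x]},
\]
and, for each fixed \(c\) and all large \(n\) (so that \(r_n(c)\) is defined and lies in \((0,1]\)),
\[
\Bigl\{\max_{[x]}Z_{n,[x]}\le c\Bigr\}=\{\Xi_n((c,\infty))=0\}=\{V_n(c)=0\}=\{\diam\Gamma_{G_n}^*(r_n(c))\le d\}.
\]
The endpoint convention is harmless: \(\{\tau\le r\}\) coincides with \(\{\dist\le d \text{ at time } r\}\) because the infimum is attained, the activation times being right-closed via \(T_\xi\le r\).

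\textbf{Step 2: pass to the limit.} By Corollary~\ref{cor:exact-window},
\[
\PP\Bigl(\max_{[x]}Z_{n,[x]}\le c\Bigr)\to\exp\{-e^{-c}\}\quad\text{for every } c\in\mathbb R.
\]
Convergence of the distribution functions at every point, all of which are continuity points of the Gumbel law, gives the claimed convergence in distribution. The limit is a proper distribution function, so no mass escapes to \(\pm\infty\).

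Alternatively, one can argue from Theorem~\ref{thm:main-ppp} directly. The set \((c,\infty)\) is not relatively compact, so vague convergence alone does not control \(\Xi_n((c,\infty))\). One needs the tail bound
\[
\PP\bigl(\Xi_n((K,\infty))>0\bigr)\to0 \quad\text{as } K\to\infty, \text{ uniformly in } n.
\]
This follows from a first-moment estimate \(\EE\,\Xi_n((K,\infty))\le (1+o(1))e^{-K}\), obtained from the one-point coverage estimates underlying the theorem. For this reason I would cite Corollary~\ref{cor:exact-window}, where that issue is already handled. The main, and only mild, obstacle is exactly this identification of the maximum and the control of the unbounded interval. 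Given the earlier corollary, the remainder is routine.
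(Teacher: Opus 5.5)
Your proposal is correct and matches the paper's proof. The paper likewise identifies $\bigl\{\frac{N_n^{d-1}}{d!}(R_n^*)^d-\log M_n\le c\bigr\}$ with $\{V_n(c)=0\}$ and applies Corollary~\ref{cor:exact-window} at each fixed $c$; your justification of that identity (monotonicity in $r$, vertex-transitivity, attainment of the infimum) fills in details the paper leaves implicit.
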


\begin{corollary}[Original generator-density parametrization]
\label{cor:original-q}
Let
\[
    Q_n^*
    :=
    \inf\{q:\diam\Gamma_{G_n}(q)\le d\}.
\]
Then
\[
    \frac{2^d}{d!}N_n^{d-1}(Q_n^*)^d-\log N_n
    \xrightarrow{d}
    \Gum-\log2.
\]
If
\begin{equation}
    \frac{2^d}{d!}N_n^{d-1}q_n^d
    =
    \log N_n+c+o(1),
    \label{eq:q-window}
\end{equation}
then, writing \(V_n^{(q)}\) for the number of antipodal defect
classes in \(\Gamma_{G_n}(q_n)\),
\[
    d_{\mathrm{TV}}
    \bigl(\mathcal L(V_n^{(q)}),\Poi(e^{-c}/2)\bigr)
    \longrightarrow0
\]
and
\[
    \PP\bigl(\diam\Gamma_{G_n}(q_n)\le d\bigr)
    \longrightarrow
    \exp\{-e^{-c}/2\}.
\]
In particular,
\[
    q^d\sim
    \frac{d!}{2^d}\frac{\log N}{N^{d-1}}
\]
is the sharp fixed-\(d\) threshold throughout the odd-order abelian
class.
\end{corollary}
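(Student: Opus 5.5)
The plan is to reduce everything to the pair-density results by an exact monotone coupling, and then handle the small distortion coming from \(r(q)=2q-q^2\) and \(\log M_n=\log N_n-\log 2+o(1)\).

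\textbf{Coupling.} Realize \(\Gamma_{G}(q)\) for all \(q\) at once by i.i.d.\ uniform times \(U_a\), \(a\in G\setminus\{0\}\), and put \(T_{[a]}:=r(\min(U_a,U_{-a}))\). Since \(\PP(\min(U_a,U_{-a})\le q)=r(q)\), each \(T_\xi\) is \(\operatorname{Unif}[0,1]\), and the \(T_\xi\) are independent. Moreover \(\Gamma_G(q)=\Gamma_G^*(r(q))\) holds simultaneously for all \(q\), because \(r\) is an increasing bijection of \([0,1]\). Hence \(R_n^*=r(Q_n^*)=2Q_n^*-(Q_n^*)^2\) identically.

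\textbf{Hitting time.} By Corollary~\ref{cor:gumbel-pair}, \(N_n^{d-1}(R_n^*)^d/d!=\log M_n+O_\PP(1)\). In particular \(R_n^*=O_\PP\bigl((\log N_n/N_n^{d-1})^{1/d}\bigr)\to0\), and therefore \(Q_n^*\to0\) in probability as well. Writing \(R_n^*=2Q_n^*(1+O(Q_n^*))\) gives
\[
\frac{2^d}{d!}N_n^{d-1}(Q_n^*)^d=\frac{N_n^{d-1}}{d!}(R_n^*)^d\bigl(1+O(Q_n^*)\bigr).
\]
The error term is \(O_\PP(Q_n^*\log N_n)\), and this tends to \(0\) because \(Q_n^*\log N_n=O_\PP\bigl((\log N_n)^{1+1/d}N_n^{-(d-1)/d}\bigr)\). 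Since \(\log M_n=\log N_n-\log2+o(1)\), Slutsky's lemma gives the first display, with limit \(\Gum-\log2\).

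\textbf{Window statements.} Under \eqref{eq:q-window}, the same expansion applied to the deterministic \(q_n\to0\) shows that \(r(q_n)=r_n(c_n)\) with \(c_n\to c':=c+\log2\). Note that \(e^{-c'}=e^{-c}/2\). Because \(\tau_{G_n,[x]}>r\) exactly when \(Z_{n,[x]}>c\) for \(r=r_n(c)\), we have
\[
V_n^{(q)}=\Xi_n\bigl((c_n,\infty)\bigr).
\]
I would show that this count converges in distribution to \(\Poi(e^{-c'})\); for integer-valued variables this is equivalent to convergence in total variation. The argument has three parts.

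First, truncate at a large \(K\). The tail term \(\Xi_n((K,\infty))=V_n(K)\) is \(\Poi(e^{-K})\) in the limit by Corollary~\ref{cor:exact-window}, so \(\PP\bigl(\Xi_n((K,\infty))>0\bigr)\) is at most about \(e^{-K}\).

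Second, \(\Xi_n([c'-\varepsilon,c'+\varepsilon])\) converges in law to a Poisson variable with mean \(O(\varepsilon)\), by Theorem~\ref{thm:main-ppp}. The boundary is not an issue, since the limit PPP has no fixed atoms.

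Third, \(\Xi_n((c'+\varepsilon,K])\) converges to the corresponding PPP count, again by Theorem~\ref{thm:main-ppp}.

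For large \(n\) we have \(|c_n-c'|<\varepsilon\). Letting \(n\to\infty\), then \(\varepsilon\to0\) and \(K\to\infty\), yields the Poisson limit. Finally, vertex-transitivity gives \(\diam\le d\) if and only if \(V_n^{(q)}=0\), so the diameter probability tends to \(\exp\{-e^{-c}/2\}\).

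The sharp threshold statement follows by letting \(c\to\pm\infty\). The only delicate point is the interchange of limits for the unbounded interval \((c_n,\infty)\), which is handled by the tail control at level \(K\) supplied by Corollary~\ref{cor:exact-window}.
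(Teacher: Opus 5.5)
Your proposal is correct and follows the paper's proof. It uses the same pathwise coupling $T_\xi=r(\min\{U_a,U_{-a}\})$ giving $R_n^*=r(Q_n^*)$, the same $o_{\PP}(1)$ expansion of $r(q)=2q-q^2$ together with $\log M_n=\log N_n-\log2+o(1)$, and then the pair-density results at $c+\log2$. Your one refinement is that you explicitly handle $r(q_n)=r_n(c_n)$ with $c_n\to c+\log2$ (rather than exact equality) via Theorem~\ref{thm:main-ppp} and tail control; the paper passes over this by citing Corollary~\ref{cor:exact-window} directly, which is tacitly justified by the uniformity over compact $K$ in Proposition~\ref{prop:multitime}.
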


\section{Representation supports and critical-window estimates}
\label{sec:technical}

Throughout Sections~\ref{sec:technical} and~\ref{sec:extremal}, \(d\ge2\)
is fixed, \(G\) is an abelian group of odd order \(N\), and
\(M=(N-1)/2\).

\begin{lemma}
\label{lem:odd-order}
The doubling map \(a\mapsto2a\) is an automorphism of \(G\), and
inversion has no fixed point in \(G\setminus\{0\}\).
\end{lemma}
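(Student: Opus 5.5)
The plan is to use only that $N$ is odd together with Lagrange's theorem, treating the two claims separately.

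For the doubling map: since $G$ is abelian, $a\mapsto 2a$ is a group homomorphism $G\to G$, because $2(a+b)=2a+2b$. As $G$ is finite, it is enough to show injectivity, which amounts to showing the kernel is trivial. Suppose $2a=0$. By Lagrange's theorem, $\ord(a)$ divides $N$, so it is odd. It also divides $2$, so $\ord(a)=1$ and $a=0$. Hence doubling is an injective endomorphism of a finite group, and therefore an automorphism. As an optional concrete check, one can write down the inverse directly: if $N=2k+1$, then $a\mapsto (k+1)a$ satisfies $2(k+1)a=(N+1)a=a$.

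For inversion: a fixed point of inversion is an element with $a=-a$, that is, $2a=0$. By the injectivity just proved, this forces $a=0$. So inversion has no fixed point in $G\setminus\{0\}$, and every class $[a]$ with $a\neq 0$ has exactly two elements.

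There is no real obstacle here. The only step needing any care is the appeal to Lagrange's theorem to show that the order of every element is odd.
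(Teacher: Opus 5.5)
Your proof is correct and follows essentially the same route as the paper. If \(2a=0\), then \(\ord(a)\) divides both \(2\) and the odd number \(N\), so \(a=0\); hence doubling is injective, therefore bijective on the finite group, and fixed points of inversion are exactly the solutions of \(2a=0\). Your explicit inverse \(a\mapsto(k+1)a\) is a correct extra touch that the paper does not need.
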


\begin{proof}
If \(2a=0\), then the order of \(a\) divides both \(2\) and \(N\),
so \(a=0\).  Thus doubling is injective and hence bijective.
The assertion about inversion is equivalent.
\end{proof}

For \(x\ne0\) and \(1\le s\le d\), let \(\cL_s(x)\) be the family of
\(s\)-element supports
\(E=\{\xi_1,\ldots,\xi_s\}\subseteq\cQ_G\) for which there exist
representatives \(a_i\in\xi_i\) and positive integers \(m_i\) such that
\[
    m_1+\cdots+m_s\le d,
    \qquad
    m_1a_1+\cdots+m_sa_s=x.
\]
Write \(\ell_s(x):=|\cL_s(x)|\), and set
\(\cE_d(x):=\cL_d(x)\) and \(e_d(x):=|\cE_d(x)|\).

\begin{lemma}[Support reduction]
For \(x\ne0\),
\[
    \dist_{\Gamma_G^*(r)}(0,x)\le d
\]
if and only if some support in
\(\bigcup_{s=1}^d\cL_s(x)\) is entirely active.  Moreover,
\(E\in\cE_d(x)\) precisely when \(E\) consists of \(d\) distinct
inverse classes admitting representatives
\(a_1,\ldots,a_d\) with \(a_1+\cdots+a_d=x\).
\end{lemma}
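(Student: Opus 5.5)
The plan is to pass directly between walks in the Cayley graph and collapsed representations of \(x\). For the forward direction, suppose \(\dist_{\Gamma_G^*(r)}(0,x)\le d\). Then there is a walk \(0=v_0,v_1,\dots,v_k=x\) with \(k\le d\), and \(k\ge1\) because \(x\ne0\). Each step is \(v_j-v_{j-1}=g_j\in S\cup(-S)\), so \(g_j\neq0\) and its class \([g_j]\) is active at time \(r\). Summing the steps gives \(x=g_1+\cdots+g_k\).

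Next I would group the steps by inverse class. Let \(\xi_1,\dots,\xi_s\) be the distinct classes among the \([g_j]\), so \(1\le s\le k\le d\). Fix a representative \(a_i\in\xi_i\); every step lying in \(\xi_i\) then equals either \(a_i\) or \(-a_i\). Let \(p_i\) and \(n_i\) count the steps equal to \(a_i\) and \(-a_i\) respectively. The \(\xi_i\)-part of the sum is \((p_i-n_i)a_i\).

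The obstacle is that \(p_i-n_i\) may be zero or negative, whereas the definition of \(\cL_s(x)\) requires positive multiplicities. If \(p_i<n_i\), I replace \(a_i\) by \(-a_i\); this is allowed since \(\xi_i=[-a_i]\). If \(p_i=n_i\), the class contributes nothing, so I drop it. Set \(m_i=|p_i-n_i|\) and keep only the classes with \(m_i>0\). Because \(\sum m_i\le\sum(p_i+n_i)=k\le d\), this yields \(x=\sum m_i a_i'\) over some \(s'\) surviving classes.

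It remains to check that some class survives. If every class cancelled, the whole sum would be \(0\), contradicting \(x\ne0\). Hence \(1\le s'\le d\), and the surviving classes form a support \(E\in\cL_{s'}(x)\). Every class in \(E\) is active, since each was used by the walk.

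For the converse, take an entirely active \(E\in\cL_s(x)\) with \(x=\sum m_i a_i\) and \(\sum m_i\le d\). Each \(\pm a_i\) lies in \(S\cup(-S)\) because its class is active. So the walk that takes \(m_1\) steps of \(a_1\), then \(m_2\) steps of \(a_2\), and so on, reaches \(x\) in at most \(d\) steps.

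The description of \(\cE_d(x)\) follows from a counting argument. With \(s=d\) and every \(m_i\ge1\), the constraint \(\sum m_i\le d\) forces \(m_i=1\) for all \(i\). Thus \(E\in\cE_d(x)\) exactly when \(E\) consists of \(d\) distinct classes with representatives \(a_1,\dots,a_d\) satisfying \(a_1+\cdots+a_d=x\). Lemma~\ref{lem:odd-order} ensures that inverse classes have two elements, so representatives are well defined. The main care needed in the whole argument is the sign and cancellation normalization described above.
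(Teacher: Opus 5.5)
Your proposal is correct and follows the paper's proof essentially step for step: you collapse the walk class by class, cancel or reorient each class to get positive multiplicities \(m_i=|p_i-n_i|\), reverse the construction for the converse, and note that \(s=d\) forces all \(m_i=1\). You also make explicit that at least one class must survive because \(x\ne0\), which the paper leaves implicit.
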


\begin{proof}
Consider a walk of length at most \(d\) from \(0\) to \(x\).
For each inverse class used by the walk, choose a representative
\(a_i\), and let \(u_i\) and \(v_i\) be the numbers of steps in the
directions \(a_i\) and \(-a_i\).  Delete classes with \(u_i=v_i\);
for each remaining class reverse \(a_i\) if necessary and put
\(m_i=|u_i-v_i|\).  Then \(m_i\ge1\),
\[
    x=\sum_i m_i a_i,
    \qquad
    \sum_i m_i\le\sum_i(u_i+v_i)\le d.
\]
Thus the active classes contain a support in some \(\cL_s(x)\).

Conversely, a relation
\(m_1a_1+\cdots+m_sa_s=x\) with \(\sum_i m_i\le d\) gives a walk
of length at most \(d\) by taking \(m_i\) steps in direction \(a_i\).
When \(s=d\), positivity of the \(m_i\)'s forces
\(m_1=\cdots=m_d=1\).
\end{proof}

\subsection{Top supports and overlaps}

\begin{proposition}[Uniform top-support count]
\label{prop:top-supports}
Uniformly over finite abelian groups \(G\) of odd order \(N\) and
\(x\ne0\),
\[
    e_d(x)=\frac{N^{d-1}}{d!}+O_d(N^{d-2}).
\]
\end{proposition}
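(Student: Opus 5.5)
We count ordered signed solutions and then pass to supports. Let
\[
    A(x):=\#\{(a_1,\ldots,a_d)\in(G\setminus\{0\})^d:\ a_1+\cdots+a_d=x,\ a_i\ne\pm a_j\ (i\ne j)\}.
\]
Each support \(E\in\cE_d(x)\) arises from such tuples by choosing an ordering of its \(d\) classes (\(d!\) ways) and a representative in each class. We will show that for \(E\in\cE_d(x)\) there is, apart from \(O_d(N^{d-2})\) exceptional supports, exactly one choice of signs giving sum \(x\). Granting this, \(A(x)=d!\,e_d(x)+O_d(N^{d-2})\) up to the exceptional contribution. So it suffices to prove \(A(x)=N^{d-1}+O_d(N^{d-2})\) and to bound the number of supports admitting two sign patterns.

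For the count of \(A(x)\), start from the unrestricted number \(N^{d-1}\) of tuples in \(G^d\) with sum \(x\): choose \(a_1,\ldots,a_{d-1}\) freely, and \(a_d\) is then determined. Subtract the tuples violating a condition, namely some \(a_i=0\) or some \(a_i=\pm a_j\). For each single linear condition, the solution set of the system \(\{\sum a_k=x,\ \text{condition}\}\) has size \(O(N^{d-2})\). For \(a_i=a_j\), substitution gives \(2a_i+\sum_{k\ne i,j}a_k=x\), and we can solve for another variable when \(d\ge3\). When \(d=2\), the equation \(2a_1=x\) has exactly one solution by Lemma~\ref{lem:odd-order}, which is \(O(1)=O(N^{0})\). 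The condition \(a_i=-a_j\) similarly gives a free variable elimination, and for \(d=2\) it forces \(x=0\), which is excluded. Since there are \(O_d(1)\) conditions, inclusion–exclusion (or a union bound) gives \(A(x)=N^{d-1}+O_d(N^{d-2})\) uniformly in \(G\) and \(x\ne0\).

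The main obstacle is the overlap step: bounding the supports \(E=\{\xi_1,\ldots,\xi_d\}\) that admit two distinct sign vectors \(\epsilon\ne\epsilon'\) with \(\sum\epsilon_i a_i=\sum\epsilon'_i a_i=x\). Subtracting the two relations gives \(2\sum_{i\in I}\epsilon_i a_i=0\) for the nonempty set \(I\) where the signs differ. Since doubling is an automorphism (Lemma~\ref{lem:odd-order}), this becomes \(\sum_{i\in I}\epsilon_i a_i=0\). We then count tuples satisfying both \(\sum\epsilon_i a_i=x\) and this extra independent relation. If \(|I|\ge2\), pick \(i_0\in I\) and \(j\notin I\) if one exists; the two equations determine two distinct variables, giving \(O(N^{d-2})\) tuples. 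If \(I=\{i_0\}\), the relation forces \(a_{i_0}=0\), which is impossible. If \(I=[d]\), then \(x=\sum\epsilon_i a_i=0\) is excluded. Summing over the \(O_d(1)\) pairs \((\epsilon,\epsilon')\), the exceptional supports number \(O_d(N^{d-2})\), and each contributes at most \(2^d d!=O_d(1)\) tuples.

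Combining the steps gives \(d!\,e_d(x)=A(x)+O_d(N^{d-2})\), which yields the claim. The distinct-classes condition \(a_i\ne\pm a_j\) is exactly what makes the \(d\) classes distinct, so the correspondence between tuples and supports is exact once the exceptional supports are removed.
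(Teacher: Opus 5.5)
Your proof is correct and follows essentially the same route as the paper. You count ordered tuples with nonzero entries in distinct inverse classes as \(N^{d-1}+O_d(N^{d-2})\), then use invertibility of doubling to show that a second sign pattern forces an extra relation \(\sum_{i\in I}\epsilon_i a_i=0\), so such collisions affect only \(O_d(N^{d-2})\) tuples (your explicit handling of \(|I|=1\) and \(I=[d]\) just spells out what the paper's restriction to nonempty proper sign-reversal sets leaves implicit).
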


\begin{proof}
Let \(T_x\) be the ordered \(d\)-tuples
\((a_1,\ldots,a_d)\in G^d\) satisfying
\(a_1+\cdots+a_d=x\), with every \(a_i\ne0\) and
\([a_i]\ne[a_j]\) for \(i\ne j\).
Without these restrictions there are exactly \(N^{d-1}\) solutions.
Tuples with \(a_i=0\) contribute \(O_d(N^{d-2})\).  The same bound
holds for \(a_i=a_j\): after the other \(d-2\) variables are fixed,
one obtains an equation \(2a_i=b\), which has a unique solution by
Lemma~\ref{lem:odd-order}.  If \(a_i=-a_j\), then for \(d=2\) the
target equation would force \(x=0\), while for \(d\ge3\) one may
choose \(a_i\) and \(d-3\) of the remaining entries freely and the
last entry is determined by the target equation.  This again gives
\(O_d(N^{d-2})\) tuples.  Hence
\[
    |T_x|=N^{d-1}+O_d(N^{d-2}).
\]

It remains to control multiple orientations of a quotient support.
Suppose \(a_1+\cdots+a_d=x\) and reversing the signs on a nonempty
proper set \(S\subset[d]\) gives another representation of \(x\).
Then
\[
    2\sum_{i\in S}a_i=0,
\]
and Lemma~\ref{lem:odd-order} gives
\(\sum_{i\in S}a_i=0\).  For fixed \(S\), after \(d-2\) variables
are fixed, one remaining variable in \(S\) and one in \(S^c\) are
uniquely determined.  Thus only \(O_d(N^{d-2})\) ordered tuples are
involved in orientation collisions.  Consequently
\[
    |T_x|=d!\,e_d(x)+O_d(N^{d-2}),
\]
which proves the claim.
\end{proof}

\begin{lemma}[Common top supports]
\label{lem:common-top}
If \([x]\ne[y]\), then
\[
    |\cE_d(x)\cap\cE_d(y)|=O_d(N^{d-2})
\]
uniformly in \(G,x,y\).
\end{lemma}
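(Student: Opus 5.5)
A common top support \(E=\{\xi_1,\ldots,\xi_d\}\in\cE_d(x)\cap\cE_d(y)\) comes with two orientations. There are representatives \(a_i\in\xi_i\) with \(\sum_i a_i=x\), and signs \(\varepsilon_i\in\{\pm1\}\) with \(\sum_i\varepsilon_i a_i=y\). The plan is to bound the number of such ordered pairs \(\bigl((a_1,\ldots,a_d),\varepsilon\bigr)\) by \(O_d(N^{d-2})\). Each common support arises from at least one such pair, so this bound suffices. There are only \(2^d\) sign vectors, so we may fix \(\varepsilon\) and count tuples.

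Fix \(\varepsilon\). Let \(S=\{i:\varepsilon_i=-1\}\). If \(S=\emptyset\), then \(x=y\); if \(S=[d]\), then \(y=-x\). Both contradict \([x]\ne[y]\), so \(S\) is a nonempty proper subset of \([d]\). Adding and subtracting the two equations gives the linear system
\[
    \sum_{i\in S^c}2a_i=x+y,\qquad \sum_{i\in S}2a_i=x-y .
\]
By Lemma~\ref{lem:odd-order}, doubling is an automorphism, so this is equivalent to
\[
    \sum_{i\in S^c}a_i=\tfrac12(x+y),\qquad \sum_{i\in S}a_i=\tfrac12(x-y),
\]
where \(\tfrac12\) denotes the inverse of doubling.

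Choose \(i_0\in S\) and \(j_0\in S^c\); both exist because \(S\) is nonempty and proper. Fix the remaining \(d-2\) coordinates arbitrarily. Then \(a_{i_0}\) is determined by the second equation and \(a_{j_0}\) by the first. This gives at most \(N^{d-2}\) tuples for each \(\varepsilon\), hence at most \(2^dN^{d-2}\) in total. This bound is uniform in \(G,x,y\). This is the same mechanism as in the orientation-collision step of Proposition~\ref{prop:top-supports}.

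The only point needing care is the reduction to a nonempty proper sign set. Here the hypothesis \([x]\ne[y]\), and not merely \(x\ne y\), is essential. The invertibility of doubling is what allows two independent linear constraints to be solved uniquely, regardless of the torsion structure of \(G\). I do not expect any real obstacle beyond this.
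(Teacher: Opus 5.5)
Your proof is correct and follows essentially the same route as the paper. In both, the sign vector is non-constant because \([x]\ne[y]\), so you pick one index of each sign, fix the other \(d-2\) coordinates, and solve the resulting two-variable system uniquely by invertibility of doubling. Your decoupled system \(\sum_{i\in S^c}a_i=\tfrac12(x+y)\), \(\sum_{i\in S}a_i=\tfrac12(x-y)\) is just an explicit form of the paper's determinant-\(-2\) argument.
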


\begin{proof}
For a common support choose an ordering and orientation with
\(a_1+\cdots+a_d=x\).  Since the same inverse classes also represent
\(y\), there is \(\sigma\in\{\pm1\}^d\) such that
\[
    \sigma_1a_1+\cdots+\sigma_da_d=y.
\]
The sign vector is not constant, since \([x]\ne[y]\).
Choose \(j,k\) with \(\sigma_j=1\) and \(\sigma_k=-1\).
After the other \(d-2\) variables are fixed, \(a_j,a_k\) satisfy a
system with coefficient matrix
\[
    \begin{pmatrix}1&1\\1&-1\end{pmatrix}.
\]
Its determinant is \(-2\), so Lemma~\ref{lem:odd-order} makes the
system uniquely solvable in \(G^2\).  There are only \(O_d(1)\)
choices of signs and orderings.
\end{proof}

The following estimate is the inversion-quotient analogue, for fixed \(d\),
of the intersecting-path count underlying
\cite[Lemma~3.1]{ChristofidesMarkstromSavvidou2026}.

\begin{lemma}[Intersecting top supports]
\label{lem:dependency-count}
Fix \(k\ge1\) and pairwise distinct target classes
\([x_1],\ldots,[x_k]\).  For \(1\le j\le d-1\), the number of ordered
pairs \((E,F)\) of distinct supports from
\(\bigcup_{i=1}^k\cE_d(x_i)\) with \(|E\cap F|=j\) is
\[
    O_{d,k}(N^{2d-j-2}).
\]
\end{lemma}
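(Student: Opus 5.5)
We count ordered pairs \((E,F)\) through their ordered, oriented representations and then solve linear systems. First we fix the targets: \(E\in\cE_d(x_i)\) and \(F\in\cE_d(x_{i'})\) for some \(i,i'\in[k]\). This gives \(O_k(1)\) choices, so it suffices to bound the pairs for fixed \(i,i'\). Next we fix the combinatorial pattern, which is \(O_d(1)\) data. We order \(E=\{\xi_1,\dots,\xi_d\}\) so that \(\xi_1,\dots,\xi_j\) are the common classes. We choose representatives \(a_1,\dots,a_d\) with \(a_1+\cdots+a_d=x_i\). We then order \(F=\{\xi_1,\dots,\xi_j,\eta_{j+1},\dots,\eta_d\}\) and choose representatives \(\sigma_1a_1,\dots,\sigma_ja_j,b_{j+1},\dots,b_d\), with \(\sigma\in\{\pm1\}^j\), such that
\[
\sigma_1a_1+\cdots+\sigma_ja_j+b_{j+1}+\cdots+b_d=x_{i'}.
\]
Every pair \((E,F)\) arises from at least one such tuple \((a_1,\dots,a_d,b_{j+1},\dots,b_d)\in G^{2d-j}\) under at least one pattern. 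It is therefore enough to show that, for each fixed pattern, the number of solution tuples is \(O_d(N^{2d-j-2})\).

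The system consists of two linear equations in \(2d-j\) unknowns. A naive count, in which each equation removes one free variable, gives \(N^{2d-j-2}\). We make this rigorous by exhibiting two variables that the two equations determine uniquely once the others are fixed. Since \(j\le d-1\), both the variables \(a_{j+1},\dots,a_d\) and \(b_{j+1},\dots,b_d\) are nonempty. The first equation contains \(a_d\) and not \(b_d\). The second contains \(b_d\) and not \(a_d\). Fixing every variable except \(a_d\) and \(b_d\) leaves a triangular system: \(a_d\) is determined by the first equation, and \(b_d\) by the second. This gives at most \(N^{2d-j-2}\) tuples per pattern. Note that the automorphism property of doubling from Lemma~\ref{lem:odd-order} is not even needed here, because the coefficients are \(\pm1\) on disjoint variables.

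The one delicate point is whether the representation map is legitimate and finite-to-one in the right direction. The count bounds tuples, and each pair \((E,F)\) has at least one preimage tuple, so overcounting only helps. We must make sure that the condition \(|E\cap F|=j\) with \(E\ne F\) is faithfully encoded. This holds because we require the classes \([b_{j+1}],\dots,[b_d]\) to lie outside \(E\), but dropping this requirement also only enlarges the count. Distinctness of \(E\) and \(F\) is automatic from \(j<d\).

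The case \(x_i=x_{i'}\) (the same target) needs no separate treatment, since the argument never used \([x_i]\ne[x_{i'}]\). The bound is therefore uniform in \(G\), and the implicit constant depends only on \(d\) and \(k\), through the number of patterns \(k^2\cdot O_d(1)\).

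I expect no step here to be a genuine obstacle. The only place to be careful is keeping the variables of \(E\setminus F\) and \(F\setminus E\) separate so that the two equations are independent. This is exactly what the hypothesis \(j\le d-1\) guarantees.
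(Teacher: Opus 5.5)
Your proposal is correct and takes essentially the same approach as the paper. Both arguments count signed ordered representation pairs, over-counting by dropping the nonzero, new and distinct constraints: the first representation contributes \(N^{d-1}\), and the \(d-j\) new entries of the second contribute \(N^{d-j-1}\), times \(O_{d,k}(1)\) choices of targets, shared positions and signs. Your simultaneous elimination of \(a_d\) and \(b_d\) after fixing the other \(2d-j-2\) variables is the same count.
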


\begin{proof}
It is enough to count signed ordered representation pairs.  First
choose the target of the first support and a signed ordered
representation \(a_1+\cdots+a_d=x_i\); this gives
\(O_{d,k}(N^{d-1})\) possibilities.  Choose the \(j\) shared
positions in the two representations, a bijection between them, and
the relative signs; these choices contribute only a factor depending
on \(d\).  The second representation then has \(d-j\) new
entries.  After \(d-j-1\) of them are chosen freely in \(G\), the
target equation determines the last one.  Ignoring the requirements
that the resulting inverse classes be nonzero, new and distinct can
only increase the count.  Thus the number of ordered support pairs is
\(O_{d,k}(N^{d-1}N^{d-j-1})\), as claimed.
\end{proof}

\subsection{Lower supports and exceptional targets}

\begin{lemma}[Average lower-support bound]
\label{lem:lower-average}
For every \(1\le s<d\),
\[
    \sum_{[x]\in\cQ_G}\ell_s(x)=O_d(N^s).
\]
\end{lemma}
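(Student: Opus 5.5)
The idea is to swap the order of summation: rather than bound each \(\ell_s(x)\) separately, which torsion can make large, we count pairs (support, target) by first fixing the support. Fix \(1\le s<d\). Each \(E\in\cL_s(x)\) is witnessed by a signed representation \((a_1,\dots,a_s)\) with \(a_i\in\xi_i\), together with positive integers \(m_1,\dots,m_s\) satisfying \(\sum m_i\le d\) and \(\sum m_ia_i=x\). Choose one such witness for each pair \((E,[x])\) with \(E\in\cL_s(x)\). This gives an injection from \(\{(E,[x]):E\in\cL_s(x)\}\) into the set of triples
\[
\bigl((a_1,\ldots,a_s),(m_1,\ldots,m_s)\bigr),
\]
because the target \(x\), and hence \([x]\), is read off as \(\sum m_ia_i\). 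Strictly, for each class \([x]\) we may use the representative \(x\) itself, since \(\ell_s(x)=\ell_s(-x)\) by negating all representatives. Consequently
\[
\sum_{[x]\in\cQ_G}\ell_s(x)\;\le\;\#\{(a_1,\ldots,a_s)\in G^s\}\cdot\#\{(m_1,\ldots,m_s)\in\mathbb Z_{>0}^s:\textstyle\sum m_i\le d\}\;\le\;N^s\binom{d}{s}.
\]

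The key steps are as follows.

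\begin{enumerate}
\item Record the witness map \((E,[x])\mapsto(\text{ordered signed reps},\ m)\). Ordering the support and fixing signs costs at most a factor \(s!\,2^s\) if one prefers to count supports rather than tuples. That factor is harmless either way, since it depends only on \(d\).
\item Observe that the map is injective, or at worst finite-to-one with multiplicity \(O_d(1)\). The support \(E\) is the set of classes \([a_i]\), and the target is \(\sum m_ia_i\).
\item Bound the image size by \(N^s\) times the number of compositions, which is \(O_d(1)\).
\end{enumerate}

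The only point requiring care is that a single support \(E\) may represent many targets, and a single target may have many supports. The double count handles both at once, because the pair is recovered from the witness. No use of odd order or of Lemma~\ref{lem:odd-order} is needed, and there is no real obstacle.

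The estimate is deliberately an average one. Torsion can make individual \(\ell_s(x)\) large: for example, \(\ell_1(x)\) can be large when many elements \(a\) satisfy \(ma=x\) for some \(m\le d\). The summed bound avoids any case analysis of torsion strata, as the introduction advertises.
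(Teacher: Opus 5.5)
Your proposal is correct and is essentially the paper's argument: both swap the order of summation and use the fact that a support, together with one of its \(O_d(1)\) choices of signs and multiplicities, determines the target class. The paper phrases this as ``each of the \(\binom{M}{s}\) supports lies in \(O_d(1)\) target families,'' whereas you inject pairs \((E,[x])\) into \(G^s\times\{\text{compositions}\}\); this difference is cosmetic.
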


\begin{proof}
Fix an \(s\)-element support
\(E=\{\xi_1,\ldots,\xi_s\}\).  There are \(O_d(1)\) positive integer
vectors \((m_1,\ldots,m_s)\) with \(\sum_i m_i\le d\), and
\(2^s=O_d(1)\) choices of representatives \(a_i\in\xi_i\).  Each
choice determines at most one target class through
\(x=m_1a_1+\cdots+m_sa_s\).  Thus a fixed support belongs to
\(O_d(1)\) target families.  Since there are
\(\binom Ms=O_d(N^s)\) supports, the result follows.
\end{proof}

Fix a compact set \(K\subset\mathbb R\) and put
\[
    r_*:=\max_{c\in K}r_N(c),
    \qquad
    \beta_N:=\max_{1\le s<d}N^{s-1}r_*^s.
\]
Since \(d\) is fixed,
\[
    \beta_N
    =
    O_d\left(
        \max_{1\le s<d}
        (\log N)^{s/d}N^{-(d-s)/d}
    \right)
    =o(1).
\]
Let \(H_N:=\beta_N^{-1/2}\).  Call \([x]\) regular if
\[
    \ell_s(x)\le H_NN^{s-1}
    \qquad(1\le s<d),
\]
and exceptional otherwise.  Write \(\cR_N\) and \(\cX_N\) for the
two sets; their dependence on the fixed compact set \(K\) is suppressed.

\begin{lemma}[Lower-support peeling]
\label{lem:peeling}
We have \(|\cX_N|=o(N)\), and uniformly over regular targets,
\[
    \sum_{s<d}\ell_s(x)r_*^s=o(1).
\]
\end{lemma}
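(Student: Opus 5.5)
The plan is a Markov-type counting argument for \(|\cX_N|\), followed by a direct substitution of the regularity bound.

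For the first claim, I would apply Lemma~\ref{lem:lower-average} one level \(s\) at a time. For each \(1\le s<d\), a target \([x]\) violating the \(s\)-th regularity inequality has \(\ell_s(x)>H_NN^{s-1}\). Summing over such targets gives
\[
    \#\{[x]:\ell_s(x)>H_NN^{s-1}\}\cdot H_NN^{s-1}
    <\sum_{[x]\in\cQ_G}\ell_s(x)=O_d(N^s).
\]
So this set has size \(O_d(N/H_N)\). A union bound over the \(d-1\) values of \(s\) then yields \(|\cX_N|=O_d(N/H_N)=O_d(N\beta_N^{1/2})\). This is \(o(N)\), since \(\beta_N=o(1)\), as computed just before the definition of regularity.

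For the second claim, I would fix a regular target \([x]\). Each term is bounded by
\[
    \ell_s(x)r_*^s\le H_NN^{s-1}r_*^s\le H_N\beta_N=\beta_N^{1/2},
\]
using the definition of \(\beta_N\) as the maximum of \(N^{s-1}r_*^s\) over \(s<d\). Summing over the \(d-1\) values of \(s\) gives the bound \((d-1)\beta_N^{1/2}=o(1)\). This bound does not depend on \(x\), so the convergence is uniform over \(\cR_N\).

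The only point that needs care is the asymptotic \(\beta_N=o(1)\) itself, which the paper states directly. I would verify it as follows. Because \(K\) is compact, \(r_*=O((\log N)^{1/d}N^{-(d-1)/d})\), so
\[
    N^{s-1}r_*^s=O\bigl((\log N)^{s/d}N^{s-1-s(d-1)/d}\bigr)=O\bigl((\log N)^{s/d}N^{-(d-s)/d}\bigr).
\]
For \(s<d\) the power of \(N\) is strictly negative, so this tends to zero; and since \(d\) is fixed, the maximum over the finitely many \(s\) is still \(o(1)\). I expect no real obstacle. The choice \(H_N=\beta_N^{-1/2}\) is what balances the two claims: it makes the exceptional set small, of size \(N\beta_N^{1/2}\), while keeping the peeled contribution small, of size \(\beta_N^{1/2}\).
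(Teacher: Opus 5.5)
Your proposal is correct and matches the paper's proof: Markov's inequality applied to Lemma~\ref{lem:lower-average}, with a union bound over \(s<d\), gives \(|\cX_N|=O_d(N/H_N)=o(N)\). Substituting the regularity bound then gives \((d-1)H_N\beta_N=O_d(\sqrt{\beta_N})=o(1)\) uniformly over regular targets.
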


\begin{proof}
Lemma~\ref{lem:lower-average} and Markov's inequality give
\[
    |\cX_N|
    \le
    \sum_{s<d}\frac{O_d(N^s)}{H_NN^{s-1}}
    =
    O_d(N/H_N)
    =
    o(N).
\]
For a regular target,
\[
    \sum_{s<d}\ell_s(x)r_*^s
    \le
    H_N\sum_{s<d}N^{s-1}r_*^s
    \le
    (d-1)H_N\beta_N
    =
    O_d(\sqrt{\beta_N})
    =
    o(1).
\]
\end{proof}

\subsection{Uniform multitime factorization}

For \(x\ne0\), write
\[
    D_x(r):=\{\dist_{\Gamma_G^*(r)}(0,x)>d\}.
\]

\begin{proposition}[Uniform multitime factorization]
\label{prop:multitime}
Fix \(k\ge1\) and a compact set \(K\subset\mathbb R\).
Let \([x_1],\ldots,[x_k]\in\cR_N\) be pairwise distinct and let
\(c_1,\ldots,c_k\in K\).  Put \(r_i=r_N(c_i)\).  Then, uniformly in
the group, the regular targets and the \(c_i\)'s,
\[
    \PP\left(\bigcap_{i=1}^kD_{x_i}(r_i)\right)
    =
    M^{-k}
    \exp\left\{-\sum_{i=1}^kc_i\right\}(1+o(1)).
\]
\end{proposition}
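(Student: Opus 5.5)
Via Lemma~\ref{lem:support-reduction}'s reduction (the unnumbered support-reduction lemma), \(D_{x_i}(r_i)\) is the event that no support in \(\bigcup_{s\le d}\cL_s(x_i)\) is fully active by time \(r_i\). Since the times \(r_i\) differ, we work with the independent uniforms \(T_\xi\) directly. For each pair \((i,E)\) with \(E\in\bigcup_s\cL_s(x_i)\), let \(B_{i,E}:=\{T_\xi\le r_i\ \forall\xi\in E\}\). These are increasing events in the product space, and \(\bigcap_iD_{x_i}(r_i)=\bigcap_{i,E}B_{i,E}^c\). We first peel off lower supports and then treat the top layer with Janson's inequality (upper bound) and Harris--FKG (lower bound).

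\emph{Peeling.} Let \(A\) be the intersection over top supports only, i.e.\ \(E\in\cE_d(x_i)\). Then
\[
\PP(A)-\PP\Bigl(\bigcap_iD_{x_i}(r_i)\Bigr)\le\sum_i\sum_{s<d}\ell_s(x_i)r_*^s=o(1)
\]
by Lemma~\ref{lem:peeling}. This absolute error is too large, since the target is of order \(M^{-k}\). We therefore use Harris instead. All events \(B_{i,E}^c\) are decreasing, so
\[
\PP\Bigl(\bigcap_iD_{x_i}\Bigr)\ge\PP(A)\prod_{i}\prod_{s<d,\,E\in\cL_s(x_i)}(1-r_i^{s})=\PP(A)\,(1-o(1)),
\]
the last step because \(\sum\ell_s r_*^s=o(1)\) for regular targets. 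Trivially \(\PP(\bigcap_iD_{x_i})\le\PP(A)\). So it suffices to prove \(\PP(A)=M^{-k}e^{-\sum c_i}(1+o(1))\).

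\emph{Top layer.} Put \(\mu:=\sum_i\sum_{E\in\cE_d(x_i)}r_i^d\). By Proposition~\ref{prop:top-supports},
\[
\mu=\sum_i\Bigl(\tfrac{N^{d-1}}{d!}+O(N^{d-2})\Bigr)\frac{d!(\log M+c_i)}{N^{d-1}}=k\log M+\sum_ic_i+O(\log N/N).
\]
Hence \(e^{-\mu}=M^{-k}e^{-\sum c_i}(1+o(1))\); here the \(O(N^{d-2})\) precision of Proposition~\ref{prop:top-supports} is exactly what is needed. If a support \(E\) lies in \(\cE_d(x_i)\cap\cE_d(x_j)\) for \(i\ne j\), it appears as two events \(B_{i,E}\supseteq B_{j,E}\) when \(r_j\le r_i\). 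Lemma~\ref{lem:common-top} bounds the number of such supports by \(O(N^{d-2})\), so these duplicates contribute \(O(N^{d-2}r_*^d)=O(\log N/N)\) to \(\mu\). We may therefore drop the redundant copy, keeping the one with the smaller time, and this changes \(\mu\) only by \(o(1)\).

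For the lower bound, Harris gives \(\PP(A)\ge\prod(1-\PP(B))=\exp(-\mu-O(\sum\PP(B)^2))\), and \(\sum\PP(B)^2=O(N^{d-1}r_*^{2d})=o(1)\). For the upper bound, Janson gives \(\PP(A)\le e^{-\mu+\Delta}\), where \(\Delta\) sums \(\PP(B\cap B')\) over ordered pairs of distinct supports sharing \(j\in[1,d-1]\) classes. Lemma~\ref{lem:dependency-count} bounds this by
\[
\Delta=\sum_{j=1}^{d-1}O(N^{2d-j-2})\,r_*^{2d-j}.
\]
Writing \(r_*^d\asymp\log N/N^{d-1}\), each term is \(N^{2d-j-2}(\log N)^{(2d-j)/d}N^{-(d-1)(2d-j)/d}=(\log N)^{O(1)}N^{-(d-j)/d-1+1}\cdot N^{-1+\ldots}\). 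Checking the exponent directly: it equals \(2d-j-2-(d-1)(2d-j)/d=-2+(2d-j)/d=-j/d<0\). Thus \(\Delta=o(1)\), and both bounds match \(e^{-\mu}(1+o(1))\). All constants depend only on \(d,k,K\), which gives the required uniformity.

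The main obstacle is the peeling step. A union bound on the lower supports loses a factor of \(M^{k}\); the fix is the Harris/FKG product argument above, which turns the \(o(1)\) additive bound of Lemma~\ref{lem:peeling} into a multiplicative \(1-o(1)\) factor. The second delicate point is verifying that \(\Delta=o(1)\) for every \(j\), which the exponent computation settles.
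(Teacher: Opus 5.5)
Your proof is correct and follows the paper's argument step for step. On the top layer you use the Harris lower bound and the Janson upper bound, with $\mu$, the merging error and $\Delta$ controlled by Proposition~\ref{prop:top-supports}, Lemma~\ref{lem:common-top} and Lemma~\ref{lem:dependency-count}. You then use Harris to turn the lower supports into a multiplicative $1-o(1)$ factor; the paper instead bounds $\PP(B_{\mathrm{low}}^c)$ by a union bound and applies Harris once, which is equivalent to your product.

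One slip: since $B_{j,E}\subseteq B_{i,E}$ when $r_j\le r_i$, the redundant copy in the avoidance intersection is the one with the \emph{smaller} time, so you should keep the larger threshold, as the paper does. With your choice the avoidance event is enlarged. That is harmless for the Janson upper bound, but it means your Harris lower bound must be applied to the undeduplicated family, whose mean is exactly the $\mu$ you defined.
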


\begin{proof}
For each \(i\) and \(E\in\cE_d(x_i)\), let \(A(E,i)\) be the event
that every coordinate of \(E\) is active by time \(r_i\).  If the
same support occurs for several targets, the corresponding events are
nested; retain only the one with the largest threshold.  Let
\(\mathcal A\) be the resulting family and put
\(\mu=\sum_{A\in\mathcal A}\PP(A)\).

By Proposition~\ref{prop:top-supports} and
\eqref{eq:critical-r},
\[
    e_d(x_i)r_i^d=\log M+c_i+o(1).
\]
Lemma~\ref{lem:common-top} shows that merging supports common to
different targets changes the sum of the individual means by at most
\[
    O_{d,k}(N^{d-2}r_*^d)
    =
    O_{d,k}\left(\frac{\log N}{N}\right)
    =
    o(1).
\]
Hence
\[
    \mu=k\log M+\sum_{i=1}^kc_i+o(1).
\]

Each event in \(\mathcal A\) is decreasing in the usual
coordinatewise order on the activation times.  Harris's correlation
inequality applies to product measures, and Riordan and Warnke's
Janson inequality explicitly applies equally to decreasing events
\cite{Harris1960,RiordanWarnke2015}.  With \(\Delta\) denoting the
sum of \(\PP(A\cap B)\) over ordered pairs of distinct dependent
events,
\[
    \prod_{A\in\mathcal A}(1-\PP(A))
    \le
    \PP\left(\bigcap_{A\in\mathcal A}A^c\right)
    \le
    \exp\{-\mu+\Delta/2\}.
\]
Moreover,
\[
    \sum_{A\in\mathcal A}\PP(A)^2
    =
    O_{d,k}(N^{d-1}r_*^{2d})
    =
    O_{d,k}\left(\frac{(\log N)^2}{N^{d-1}}\right)
    =
    o(1),
\]
so the product on the left equals \(\exp\{-\mu+o(1)\}\).

If two distinct top supports meet in \(j\) inverse classes, their
joint activation probability is at most \(r_*^{2d-j}\).
Lemma~\ref{lem:dependency-count} therefore gives
\[
    \Delta
    \le
    C_{d,k}\sum_{j=1}^{d-1}
    N^{2d-j-2}r_*^{2d-j}.
\]
For \(1\le j<d\),
\[
    N^{2d-j-2}r_*^{2d-j}
    =
    O_d\left(
        N^{-j/d}(\log N)^{(2d-j)/d}
    \right)
    =
    o(1),
\]
and hence \(\Delta=o(1)\).  Therefore the probability
\(P_{\mathrm{top}}\) that no top bad event occurs satisfies
\[
    P_{\mathrm{top}}
    =
    M^{-k}\exp\left\{-\sum_{i=1}^kc_i\right\}(1+o(1)).
\]

Let \(B_{\mathrm{low}}\) be the event that none of the lower supports
of the \(k\) targets is fully active by its corresponding threshold.
Lemma~\ref{lem:peeling} and the union bound give
\[
    \PP(B_{\mathrm{low}}^c)
    \le
    \sum_{i=1}^k\sum_{s<d}\ell_s(x_i)r_*^s
    =
    o(1).
\]
Both top avoidance and \(B_{\mathrm{low}}\) are increasing events in
the activation-time coordinates, so Harris's inequality yields
\[
    P_{\mathrm{top}}\PP(B_{\mathrm{low}})
    \le
    \PP\left(\bigcap_{i=1}^kD_{x_i}(r_i)\right)
    \le
    P_{\mathrm{top}}.
\]
Since \(\PP(B_{\mathrm{low}})=1-o(1)\), the proposition follows.
\end{proof}

\begin{lemma}[Exceptional targets vanish]
\label{lem:exceptional-vanish}
Fix a compact set \(K\subset\mathbb R\).  Uniformly for \(c\in K\),
\[
    \EE\#\{[x]\in\cX_N:D_x(r_N(c))\}=o(1).
\]
Consequently, with probability \(1-o(1)\), no exceptional target
produces a point of the defect process in any prescribed bounded
interval.
\end{lemma}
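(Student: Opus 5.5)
The bound follows from a uniform single-target upper bound on \(\PP(D_x(r))\) that holds for every target, regular or not, combined with \(|\cX_N|=o(N)\) from Lemma~\ref{lem:peeling}.

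Fix \(c\in K\) and put \(r=r_N(c)\). For any \(x\ne0\), \(D_x(r)\) is contained in the event that no support in \(\cE_d(x)\) is fully active by time \(r\). We bound this event by the upper half of the Janson inequality used in Proposition~\ref{prop:multitime}, now with \(k=1\), and we simply discard the lower supports.

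By Proposition~\ref{prop:top-supports},
\[
\mu_x=e_d(x)r^d=\log M+c+o(1)
\]
uniformly in \(x\) and \(c\in K\). By Lemma~\ref{lem:dependency-count} with \(k=1\) (its constant depends only on \(d\)), \(\Delta_x=o(1)\) uniformly in \(x\), exactly as in that proof. Hence
\[
\PP(D_x(r))\le\exp\{-\mu_x+\Delta_x/2\}=M^{-1}e^{-c}(1+o(1))\le C_K/M,
\]
uniformly over all nonzero \(x\) and \(c\in K\). No regularity assumption enters here, because the lower supports only make \(D_x\) smaller.

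Summing over exceptional classes gives
\[
\EE\#\{[x]\in\cX_N:D_x(r_N(c))\}\le|\cX_N|\,C_K/M=o(N)\cdot O(1/N)=o(1).
\]
Note that \(\cX_N\) is defined using the same compact \(K\) through \(r_*\). Lemma~\ref{lem:peeling} gives \(|\cX_N|=o(N)\) for that \(K\), so the bound is uniform in \(c\in K\).

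For the consequence, let \(I\subset\mathbb R\) be bounded and enlarge \(K\) to contain \(\bar I\). A point \(Z_{n,[x]}\) lies in \(I\) only if \(Z_{n,[x]}\ge a:=\inf I\). The map \(z\mapsto\tau\) is increasing, so \(Z_{n,[x]}\ge a\) forces \(\tau_{[x]}\ge r_N(a)\). Because \(\tau\) is a hitting time and right-continuity holds for all but a null set, this implies that \(D_x(r_N(a')) \) occurs for any \(a'<a\).

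Choose \(a'\in K\) with \(a'<a\), so that \(D_x(r_N(a'))\) holds for each such \(x\). Markov's inequality applied to the count at the single time \(r_N(a')\) then shows that, with probability \(1-o(1)\), no exceptional class contributes a point to \(I\).

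The main step I would check carefully is the uniformity of the Janson bound over every \(x\). Lemma~\ref{lem:dependency-count} counts pairs among the top supports without any regularity hypothesis, so it applies. Proposition~\ref{prop:top-supports} is likewise uniform in \(x\ne0\). This is the only place where the argument departs from Proposition~\ref{prop:multitime}: here regularity is never used.
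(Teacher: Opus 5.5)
Your proposal is correct and follows the paper's proof essentially step for step: a single-target ($k=1$) top-layer Janson upper bound $\PP(D_x(r_N(c)))\le C_K/M$, valid for every $x\ne0$ with no regularity assumption, multiplied by $|\cX_N|=o(N)$, and then Markov's inequality at the left endpoint of the interval. The only differences are cosmetic and concern the second assertion: the paper takes $I=(a,b]$ and uses the exact identity $\{Z_{n,[x]}>a\}=D_x(r_N(a))$, so neither the right-continuity remark nor the auxiliary $a'<a$ is needed; and you need not enlarge $K$, since the single-time bound holds at any fixed $a$, whereas enlarging $K$ would change which targets count as exceptional.
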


\begin{proof}
For every target \(x\), \(D_x(r)\) implies that no top support in
\(\cE_d(x)\) is active.  The single-target top-layer Janson estimate,
obtained from the preceding proof with \(k=1\), and
Proposition~\ref{prop:top-supports} give uniformly for \(c\in K\)
\[
    \PP(D_x(r_N(c)))\le\frac{C_K}{M}.
\]
Since \(|\cX_N|=o(M)\) by Lemma~\ref{lem:peeling}, the expectation is
\(o(1)\).

If \(I=(a,b]\) is bounded and an exceptional target contributes a
point \(Z_{[x]}\in I\), then \(Z_{[x]}>a\), equivalently
\(D_x(r_N(a))\) occurs.  Markov's inequality gives the second
assertion; finite unions of bounded intervals are identical.
\end{proof}

\section{The extremal process}
\label{sec:extremal}

\begin{proof}[Proof of Theorem~\ref{thm:main-ppp}]
Write \(M=M_n\).  By \eqref{eq:defect-process} and
\eqref{eq:critical-r},
\[
    \{Z_{n,[x]}>c\}=D_x(r_n(c)).
\]

Let \(B\) be a finite disjoint union of bounded half-open intervals
and choose a compact set \(K\) containing all their endpoints.
Let \(\cR_N\) and \(\cX_N\) be the corresponding regular and
exceptional target sets.

For bounded intervals \(I_i=(a_i,b_i]\), inclusion--exclusion and
Proposition~\ref{prop:multitime} give, uniformly over pairwise
distinct regular targets,
\[
    \PP\bigl(
        Z_{n,[x_i]}\in I_i,\ 1\le i\le k
    \bigr)
    =
    M^{-k}\prod_{i=1}^k(e^{-a_i}-e^{-b_i})+o(M^{-k}).
\]
Put
\[
    \nu(B):=\int_Be^{-z}\,dz
\]
and
\[
    N_n^{\mathrm{reg}}(B)
    :=
    \#\{[x]\in\cR_N:Z_{n,[x]}\in B\}.
\]
Since \(|\cR_N|=M-o(M)\), summing over distinct regular targets gives
for every fixed \(k\ge1\)
\[
    \EE\bigl(N_n^{\mathrm{reg}}(B)\bigr)_k
    \longrightarrow
    \nu(B)^k.
\]

The Bonferroni inequalities applied to these factorial moments imply
\[
    \PP(N_n^{\mathrm{reg}}(B)=0)\longrightarrow e^{-\nu(B)}.
\]
Indeed, truncate the alternating factorial-moment expansion above and
below, let \(n\to\infty\), and then let the truncation level tend to
infinity.

Let
\[
    N_n^{\mathrm{exc}}(B)
    :=
    \#\{[x]\in\cX_N:Z_{n,[x]}\in B\}.
\]
Lemma~\ref{lem:exceptional-vanish} gives
\[
    \EE N_n^{\mathrm{exc}}(B)=o(1),
    \qquad
    \PP(N_n^{\mathrm{exc}}(B)>0)=o(1).
\]
Thus, for \(N_n(B):=\Xi_n(B)\),
\[
    \EE N_n(B)\longrightarrow\nu(B),
    \qquad
    \PP(N_n(B)=0)\longrightarrow e^{-\nu(B)}.
\]
The Poisson point-process convergence criterion
\cite{Kallenberg2021}, applied to finite unions of bounded half-open
intervals, yields
\[
    \Xi_n\xrightarrow{d}\PPP(e^{-z}\,dz).
\]
\end{proof}

\begin{proof}[Proof of Corollary~\ref{cor:exact-window}]
Fix \(c\) and choose a compact set \(K\) containing \(c\).  Write
\[
    V_n(c)=V_n^{\mathrm{reg}}(c)+V_n^{\mathrm{exc}}(c).
\]
For every fixed \(k\ge1\), Proposition~\ref{prop:multitime} gives
\[
    \EE\bigl(V_n^{\mathrm{reg}}(c)\bigr)_k
    =
    (|\cR_N|)_kM^{-k}e^{-kc}(1+o(1))
    \longrightarrow e^{-kc}.
\]
The factorial-moment criterion
\cite{JansonLuczakRucinski2000} gives
\(V_n^{\mathrm{reg}}(c)\xrightarrow{d}\Poi(e^{-c})\).
Since the variables are integer valued, their probability mass
functions converge pointwise; Scheff\'e's lemma then upgrades this to
total-variation convergence.

By Lemma~\ref{lem:exceptional-vanish},
\[
    \PP(V_n^{\mathrm{exc}}(c)>0)
    \le
    \EE V_n^{\mathrm{exc}}(c)
    =
    o(1).
\]
Hence
\[
    d_{\mathrm{TV}}
    \bigl(\mathcal L(V_n(c)),\Poi(e^{-c})\bigr)
    \longrightarrow0.
\]
Finally,
\[
    \PP\bigl(\diam\Gamma_{G_n}^*(r_n(c))\le d\bigr)
    =
    \PP(V_n(c)=0)
    \longrightarrow
    e^{-e^{-c}}.
\]
Inversion is a graph automorphism fixing \(0\), so \(x\) is a defect
if and only if \(-x\) is, and \(W_n(c)=2V_n(c)\).
\end{proof}

\begin{proof}[Proof of Corollary~\ref{cor:gumbel-pair}]
For fixed \(c\),
\[
    \left\{
        \frac{N_n^{d-1}}{d!}(R_n^*)^d-\log M_n\le c
    \right\}
    =
    \{V_n(c)=0\}.
\]
Corollary~\ref{cor:exact-window} gives convergence to
\(\exp\{-e^{-c}\}\), the standard Gumbel distribution function.
\end{proof}

\begin{proof}[Proof of Corollary~\ref{cor:original-q}]
Let \(U_a\), \(a\in G_n\setminus\{0\}\), be independent
\(\operatorname{Unif}[0,1]\) variables and expose
\(S_n(q)=\{a:U_a\le q\}\).  For an inverse class \(\xi=[a]\), put
\(\widehat Q_\xi=\min\{U_a,U_{-a}\}\) and
\(T_\xi=r(\widehat Q_\xi)\).  Since
\(\PP(\widehat Q_\xi\le q)=r(q)\), the variables \(T_\xi\) are
independent and uniform on \([0,1]\).  Hence the element-density and
pair-density processes may be coupled pathwise so that
\[
    R_n^*=r(Q_n^*)=2Q_n^*-(Q_n^*)^2.
\]
Since \(r(q)=2q-q^2\ge q\) on \([0,1]\), the coupling gives
\(Q_n^*\le R_n^*\).  Corollary~\ref{cor:gumbel-pair} therefore yields
\[
    Q_n^*
    =
    O_{\PP}\left(
        N_n^{-(d-1)/d}(\log N_n)^{1/d}
    \right).
\]
Moreover \(N_n^{d-1}(Q_n^*)^d=O_{\PP}(\log N_n)\), whence
\[
    N_n^{d-1}(Q_n^*)^{d+1}
    =
    O_{\PP}\left(
        N_n^{-(d-1)/d}(\log N_n)^{1+1/d}
    \right)
    =
    o_{\PP}(1).
\]
Therefore
\[
    \frac{N_n^{d-1}}{d!}
    \bigl(2Q_n^*-(Q_n^*)^2\bigr)^d
    =
    \frac{2^d}{d!}N_n^{d-1}(Q_n^*)^d+o_{\PP}(1).
\]
Since \(\log M_n=\log N_n-\log2+o(1)\),
Corollary~\ref{cor:gumbel-pair} gives the hitting-time claim.

If \eqref{eq:q-window} holds, then
\(N_n^{d-1}q_n^{d+1}=o(1)\), and \eqref{eq:r-q} gives
\[
    \frac{N_n^{d-1}}{d!}r(q_n)^d
    =
    \frac{2^d}{d!}N_n^{d-1}q_n^d+o(1)
    =
    \log M_n+c+\log2+o(1).
\]
Corollary~\ref{cor:exact-window} applied at \(c+\log2\) gives the
remaining assertions.
\end{proof}

\section{Exact representation geometry for diameter two}
\label{sec:d2}

For \(x\ne0\), let \(H_x\) be the graph with vertex set \(\cQ_G\),
where distinct classes \([a]\) and \([b]\) are adjacent if
\(x=\varepsilon a+\delta b\) for some
\(\varepsilon,\delta\in\{\pm1\}\).  Clearly \(H_x=H_{-x}\).

\begin{theorem}[Exact diameter-two representation geometry]
\label{thm:d2-geometry}
Let \(G\) be a finite abelian group of odd order \(N\), put
\(M=(N-1)/2\), and let \(x\ne0\) have order \(m\).  Then
\[
    H_x
    \cong
    P_{(m-1)/2}
    \sqcup
    \frac{N/m-1}{2}\,C_m,
\]
where \(P_s\) is the path on \(s\) vertices.  In particular,
\[
    |E(H_x)|=M-1.
\]
Every edge of the complete graph on \(\cQ_G\) belongs to precisely two
target graphs.  Moreover, if \([x]\ne[y]\), then \(H_x\) and \(H_y\)
have the unique common edge
\[
    \left\{
        \left[\frac{x+y}{2}\right],
        \left[\frac{x-y}{2}\right]
    \right\}.
\]
\end{theorem}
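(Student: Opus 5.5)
The plan is to describe \(H_x\) through an explicit involution on \(G\). Call the map \(\phi_x(a)=x-a\). For classes \([a]\ne[b]\), adjacency \(x=\varepsilon a+\delta b\) means that some representative \(a'\in[a]\) satisfies \(x-a'\in[b]\). So the edges of \(H_x\) are exactly the unordered pairs \(\{[a'],[x-a']\}\) with \(a'\ne0\), \(a'\ne x\), and \([a']\ne[x-a']\).

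First I would check how these pairs arise. The condition \(x-a'=a'\) has the unique solution \(a'=x/2\), by Lemma~\ref{lem:odd-order}. The condition \(x-a'=-a'\) is impossible since \(x\ne0\). Each edge \(\{[a],[b]\}\) should come from exactly one unordered pair \(\{a',x-a'\}\). If two sign choices both gave \(x\), then subtracting them yields \(2a=0\), \(2b=0\) or \(2(a\pm b)=0\). Each of these contradicts \([a]\ne[b]\) or \(a,b\ne0\), again by Lemma~\ref{lem:odd-order}. Now count the involution \(a'\mapsto x-a'\) on \(G\setminus\{0,x,x/2\}\). It has no fixed points there, so this set of \(N-3\) elements gives \((N-3)/2\) pairs. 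Hence \(|E(H_x)|=(N-3)/2=M-1\).

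Next, the structure. Every vertex has degree at most \(2\): a class \([a]\) has the two candidate neighbours \([x-a]\) and \([x+a]\), and these coincide only when \(2a=0\) or \(2x=0\), which is impossible. The classes \([x]\) (since \(x-x=0\)) and \([x/2]\) (a loop, discarded) have degree \(1\). So \(H_x\) is a disjoint union of one path and cycles, and the path joins \([x/2]\) to \([x]\). To identify the components I would track orbits under the group generated by the reflections \(a\mapsto x-a\) and \(a\mapsto-a\). Their composition is translation by \(x\), so each component lifts to a coset \(a+\langle x\rangle\) together with its negative.

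Take a coset \(C=a+\langle x\rangle\). If \(C=-C\), then \(C\) is the subgroup \(\langle x\rangle\) itself: from \(-a\in a+\langle x\rangle\) we get \(2a\in\langle x\rangle\), and halving inside the odd cyclic group \(\langle x\rangle\) gives \(a\in\langle x\rangle\). The nonzero elements of \(\langle x\rangle\) form \((m-1)/2\) classes, and these make up the path \(P_{(m-1)/2}\). Otherwise \(C\ne-C\), and \(C\cup(-C)\) gives \(m\) classes. On these the walk \([a]\to[x-a]\to[2x-(-a)]\ldots\) traverses translation by \(x\), so they form a cycle \(C_m\), with \(m\ge3\). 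The number of such pairs of cosets is \((N/m-1)/2\). The main obstacle is the bookkeeping that edges within a component correspond bijectively to consecutive translates. I would handle it by indexing each class in \(C\cup(-C)\) by its unique representative in \(C\).

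For the two remaining claims I would solve directly. An edge \(\{[a],[b]\}\) lies in \(H_x\) exactly when \(x\in\{\pm a\pm b\}\). The four values \(\pm a\pm b\) fall into two inverse classes, \([a+b]\) and \([a-b]\). These classes are distinct, since \(2a\ne0\) and \(2b\ne0\), and nonzero, since \([a]\ne[b]\). So each edge lies in exactly two target graphs. Conversely, a common edge of \(H_x\) and \(H_y\) needs \(\{[x],[y]\}=\{[a+b],[a-b]\}\). Up to signs this means \(a+b=x\) and \(a-b=y\), which is uniquely solvable as \(a=(x+y)/2\), \(b=(x-y)/2\) by Lemma~\ref{lem:odd-order}. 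Other sign choices for \(x\) and \(y\) only change these classes by inversion, so the common edge is unique and is the stated one.
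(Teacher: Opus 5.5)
Your proof is correct. Its backbone is the paper's: the components of \(H_x\) are the class sets of \(C\cup(-C)\) for cosets \(C\) of \(\langle x\rangle\), with \(C=-C\) only for \(C=\langle x\rangle\).

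The difference is in how you control the edges. The paper checks adjacency by hand inside each coset: a four-sign congruence \(\varepsilon j+\delta k\equiv1\pmod m\) for the classes \([jx]\), and a same-sign versus opposite-sign argument for the classes \([a+jx]\). Only then does it read off \(|E(H_x)|=M-1\) from the decomposition.

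You get the edge count independently, by pairing \(G\setminus\{0,x,x/2\}\) under \(a\mapsto x-a\). You also replace the ``no other edges'' checks with the observation that the only candidate neighbours of \([a]\) are \([x\pm a]\). Once each class in \(C\cup(-C)\) is indexed by its representative in \(C\), this makes everything transparent:
the neighbours of \([a+jx]\) are exactly \([a+(j\pm1)x]\), and in \(\langle x\rangle\) the walk \([x],[2x],\dots,[\tfrac{m-1}{2}x]\) ends at the loop at \([x/2]=[\tfrac{m-1}{2}x]\). The dihedral-orbit picture also explains why cosets of \(\langle x\rangle\) appear at all. The paper's route, by contrast, needs no degree bookkeeping.

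For the overlap statement, your explicit solution of \(a+b=\pm x\), \(a-b=\pm y\) supplies the injectivity that the paper's closing sentence leaves implicit. Equivalently, edges and unordered target pairs both number \(\binom M2\), and the map between them is onto.

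A few small repairs:
when \(m=3\) one has \([x]=[x/2]\), an isolated vertex, so ``degree \(1\)'' should be ``degree at most \(1\)''; the conclusion \(P_1\) is unaffected. The walk you wrote should read \([a]\to[x-a]\to[2x-a]\to\cdots\), since \([2x+a]\) is not a neighbour of \([x-a]\) in general. For existence of the common edge, you should note that \((x\pm y)/2\ne0\) and \([(x+y)/2]\ne[(x-y)/2]\); both follow from \([x]\ne[y]\) and \(x,y\ne0\). Finally, other sign choices may swap the two classes, not only invert them. This is harmless, since edges are unordered.
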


\begin{proof}
Since \(m\mid N\), the integer \(m\) is odd.  Put \(H=\langle x\rangle\).
The nonzero inversion classes contained in \(H\) are
\[
    [x],[2x],\ldots,\left[\frac{m-1}{2}x\right].
\]
Consecutive classes are adjacent because \((j+1)x-jx=x\).
Conversely, if \(1\le j<k\le(m-1)/2\) and \([jx]\) is adjacent to
\([kx]\), then
\[
    \varepsilon j+\delta k\equiv1\pmod m
\]
for some \(\varepsilon,\delta\in\{\pm1\}\).
Here \(1\le k-j\le(m-3)/2\) and \(3\le j+k\le m-2\); checking the
four signed combinations shows that necessarily \(k-j=1\).
Thus the classes in \(H\) induce \(P_{(m-1)/2}\).

Now let \(C=a+H\ne H\).  We have \(C\ne-C\): otherwise
\(2(a+H)=H\) in \(G/H\), and multiplication by \(2\), an automorphism
of the odd-order group \(G/H\), would give \(a+H=H\).
The pair \(C,-C\) yields the \(m\) distinct inversion classes
\[
    [a],[a+x],\ldots,[a+(m-1)x].
\]
Indeed, equality of two such classes gives either
\(j\equiv k\pmod m\) or \(2a+(j+k)x=0\); the latter would imply
\(a\in H\) by invertibility of doubling on \(G/H\).

Consecutive classes form a cycle.  If two distinct classes
\([a+jx]\) and \([a+kx]\) are adjacent, a same-sign representation
would again imply \(2a\in H\); hence the signs are opposite and
\((j-k)x=\pm x\).  Thus \(j-k\equiv\pm1\pmod m\), so there are no
other edges.  The \(N/m-1\) nontrivial cosets of \(H\) occur in pairs
\(\{C,-C\}\), proving the stated decomposition.  Its number of edges is
\[
    \frac{m-3}{2}+\frac{N/m-1}{2}m
    =
    \frac{N-3}{2}
    =
    M-1.
\]

For the overlap statement, let \([a]\ne[b]\).  The nonzero signed
sums of representatives determine exactly the target classes
\([a+b]\) and \([a-b]\).  They are distinct, since equality would
force \(2a=0\) or \(2b=0\).  Thus every edge belongs to exactly two
target graphs.

Finally, if \([x]\ne[y]\), set
\[
    a=\frac{x+y}{2},
    \qquad
    b=\frac{x-y}{2}.
\]
Then \(a,b\ne0\), \([a]\ne[b]\), and \(a+b=x\), \(a-b=y\).
Hence \(\{[a],[b]\}\) is common to \(H_x\) and \(H_y\).  Since every
edge belongs to only two target classes, it is the unique common edge.
\end{proof}

\begin{remark}
For \(d=2\), Theorem~\ref{thm:d2-geometry} sharpens the general
top-layer estimates: Proposition~\ref{prop:top-supports} gives only
\(e_2(x)=N/2+O(1)\), whereas here
\[
    e_2(x)=M-1=\frac{N-3}{2},
\]
and Lemma~\ref{lem:common-top} is sharpened from an \(O(1)\) overlap
to exactly one common support.
\end{remark}

\paragraph{Further remarks.}

Theorem~\ref{thm:main-ppp} identifies fixed diameter and odd order as
a rigid universality regime.  For even-order groups, inversion has
singleton as well as two-element orbits and multiplication by \(2\)
need not be invertible; the diameter-two threshold theory already
shows substantial dependence on involution structure
\cite{ChristofidesMarkstrom2014,ChristofidesMarkstromRange2014}.
For \(d=d_N\to\infty\), the constants hidden in the support and
dependency estimates must instead be quantified.  The threshold-scale
results of Christofides, Markstr\"om and Savvidou cover a substantial
growing-diameter range \cite{ChristofidesMarkstromSavvidou2026}, but
the centered extremal process in such a regime requires further
control of higher-order overlaps.

\end{document}